\theoremstyle{plain}
\newtheorem{thm}{Theorem}
\newtheorem{lemma}[thm]{Lemma}
\newtheorem{pblm}[thm]{Problem}
\newtheorem{prop}[thm]{Proposition}
\theoremstyle{remark}
\newtheorem*{ex}{\textbf{Example}}
\newtheorem{rmk}[thm]{\textbf{Remark}}
\newtheorem*{rmknn}{\textbf{Remark}}
\numberwithin{equation}{section}
\newcommand{\B}{\mathbb{B}}
\newcommand{\bs}{\backslash}
\newcommand{\C}{\mathbb C}
\newcommand{\Dc}{\mathcal{D}}
\newcommand{\dimH}{\dim_{\mathrm H}}
\newcommand{\F}{\mathcal F}
\newcommand{\Hb}{\mathbb H}
\newcommand{\Id}{\mathrm{Id}}
\newcommand{\M}{\mathbb M}
\newcommand{\N}{\mathbb N}
\newcommand{\R}{\mathbb{R}}
\newcommand{\s}{\mathbb{S}}
\newcommand{\Z}{\mathbb Z}
\title[Restriction estimates on hyperbolic manifolds]{Tomas-Stein restriction estimates on convex cocompact hyperbolic manifolds. I}
\author{Xiaolong Han}
\email{xiaolong.han@csun.edu}
\address{Department of Mathematics, California State University, Northridge, CA 91330, USA}
\subjclass[2010]{58J50, 35P25}
\keywords{Tomas-Stein restriction estimates, spectral measure, convex cocompact hyperbolic manifolds, limit sets, Patterson-Sullivan theory}
\thanks{} 
\date{}
\begin{document}
\maketitle

\begin{abstract}
In this paper, we investigate the Tomas-Stein restriction estimates on convex cocompact hyperbolic manifolds $\Gamma\bs\Hb^{n+1}$. Via the spectral measure of the Laplacian, we prove that the Tomas-Stein restriction estimate holds when the limit set has Hausdorff dimension $\delta_\Gamma<n/2$. This provides an example for which restriction estimate holds in the presence of hyperbolic geodesic trapping.
\end{abstract}

\section{Introduction}
In $\R^d$, the Tomas-Stein restriction theorem \cite{T, St} states that if $1\le p\le p_c:=2(d+1)/(d+3)$, then
\begin{equation}\label{eq:TSRd}
\|R_1f\|_{L^2\left(\s_1^{d-1}\right)}\le A\|f\|_{L^p(\R^d)}\quad\text{for all }f\in C^\infty_0(\R^d),
\end{equation}
where $A>0$ depends only on $d$ and $p$. Here, the Fourier transfer restriction operator (associated with the unit sphere $\s_1^{d-1}$) is defined as
$$R_1f(\xi)=\int_{\R^d}e^{-ix\cdot\xi}f(x)\,dx\quad\text{for }\xi\in\s_1^{d-1}.$$
Let $R_1^\star$ be the adjoint of $R_1$. Since $R_1^\star R_1:L^p(\R^d)\to L^{p'}(\R^d)$ for $p'=p/(p-1)$, the Tomas-Stein restriction estimate \eqref{eq:TSRd} is equivalent to
\begin{equation}\label{eq:RstarRRd}
\left\|R_1^\star R_1\right\|_{L^p(\R^d)\to L^{p'}(\R^d)}\le A^2.
\end{equation}
Denote $\Delta_{\R^d}$ the (positive) Laplacian in $\R^d$. Then $\sqrt{\Delta_{\R^d}}$ has an absolutely continuous spectrum on $[0,\infty)$ and 
$$\sqrt{\Delta_{\R^d}}=\int_0^\infty\lambda\,dE_{\sqrt{\Delta_{\R^d}}}(\lambda),$$
in which $dE_{\sqrt{\Delta_{\R^d}}}$ is the spectral measure of $\sqrt{\Delta_{\R^d}}$. Notice that $dE_{\sqrt{\Delta_{\R^d}}}(\lambda)=R_\lambda^\star R_\lambda$, where $R_\lambda$ is the Fourier restriction operator associated with the sphere $\s_\lambda^{d-1}$ with radius $\lambda$. A direct dilation argument yields
\begin{equation}\label{eq:dilationRd}
\left\|R_\lambda^\star R_\lambda\right\|_{L^p(\R^d)\to L^{p'}(\R^d)}=\lambda^{d\left(\frac1p-\frac{1}{p'}\right)-1}\left\|R_1^\star R_1\right\|_{L^p(\R^d)\to L^{p'}(\R^d)}.
\end{equation} 
Then \eqref{eq:TSRd} and \eqref{eq:RstarRRd} are also equivalent to
\begin{equation}\label{eq:specRd}
\left\|dE_{\sqrt{\Delta_{\R^d}}}(\lambda)\right\|_{L^p(\R^d)\to L^{p'}(\R^d)}\le A^2\lambda^{d\left(\frac1p-\frac{1}{p'}\right)-1}\quad\text{for }1\le p\le p_c.
\end{equation}
The Tomas-Stein restriction problem can therefore be generalized to manifolds $\M$, via spectral measure of $\sqrt{\Delta_\M}$. We assume that the Laplacian $\Delta_\M$ is nonnegative and essentially self-adjoint on $C^\infty_0(\M)\subset L^2(\M)$. (These conditions are automatically true on the convex cocompact hyperbolic manifolds that we consider in this paper. See below for details of the geometric setting.)
\begin{pblm}[Restriction estimates on manifolds via the spectral measure]\label{pblm}
Let $\M$ be a $d$-$\dim$ manifold. Is the following Tomas-Stein restriction estimate true for $\lambda>0$?
\begin{equation}\label{eq:specM}
\left\|dE_{\sqrt{\Delta_\M}}(\lambda)\right\|_{L^p(\M)\to L^{p'}(\M)}\le C\lambda^{d\left(\frac1p-\frac{1}{p'}\right)-1}\quad\text{for }1\le p\le p_c.
\end{equation} 
\end{pblm}
See also the discussion in Chen-Hassell \cite[Section 1.2]{CH}. The parameter $\lambda$ (i.e. energy) here is important since the dilation structure \eqref{eq:dilationRd} in $\R^d$ may not be available on the manifold. We are concerned with whether the restriction estimate \eqref{eq:specM} holds for all $\lambda>0$ on a manifold and how it is influenced by the underlying geometry.

If $\M$ is compact, then the Laplacian $\Delta_\M$ has a discrete spectrum of eigenvalues $0\le\lambda_0^2\le\lambda_1^2\le\cdots\to\infty$ with smooth eigenfunctions $\{u_j\}_{j=0}^\infty$. Formally, $\sqrt{\Delta_\M}=\sum_j\lambda_j\langle u_j,\cdot\rangle u_j$. So the spectral measure $dE_{\sqrt{\Delta_\M}}(\lambda)$ is a sum of Dirac delta measures at $\lambda_j$'s. Therefore, the restriction estimate \eqref{eq:specM} can never hold at $\lambda_j$'s. Instead, the appropriate ``discrete'' version of restriction estimates in this case is for the spectral projection onto finite intervals in the spectrum, e.g. $[\lambda,\lambda+1]$. These estimates in term imply the $L^p$ estimates of spectral clusters. See Sogge \cite[Chapter 5]{So}.

On non-compact and complete manifolds, the restriction estimate \eqref{eq:specM} has been proved in various settings. We mention Guillarmou-Hassell-Sikora \cite{GHS} for asymptotically conic manifolds and Chen-Hassell \cite{CH} for asymptotically hyperbolic manifolds\footnote{See also the recent work of Huang-Sogge \cite{HS}, which includes spectral projection estimates on hyperbolic spaces $\Hb^{n+1}$. The restriction estimates in \eqref{eq:specM} can be derived from \cite[Equation 1.16]{HS}.}, which are the motivation and also main resources for our investigation in the current paper. In both of these two cases, a geodesic non-trapping condition is assumed, that is, there is no geodesic which is contained in some compact region of $\M$; it in particular requires that there are no closed geodesics in $\M$.

Furthermore, Guillarmou-Hassell-Sikora \cite[Section 8C]{GHS} remarked that if there is an elliptic closed geodesic $l\subset\M$, then the restriction estimate \eqref{eq:specM} fails. In this case, one can construct well approximated eigenfunctions (i.e. quasimodes) associated with $l$. See Babich-Lazutkin \cite{BL} and Ralston \cite{R}. Precisely, there are $\lambda_j\to\infty$ and $u_j\in L^2(\M)$ such that 
$$\|(\Delta_\M-\lambda_j^2)u_j\|_{L^2(\M)}\le C_N\lambda_j^{-N}\|u_j\|_{L^2(\M)}\quad\text{for all }N\in\N\text{ as }j\to\infty.$$ 
In fact, the construction of such quasimodes associated with $l$ is local around the geodesic, i.e. $u_j\in L^2(K)$ for some compact $K\supset l$. The existence of these quasimodes ensures that following statement is \textit{invalid} for all $1\le p<2$ and $M>0$ \cite[Proposition 8.7]{GHS}.
$$\exists C>0,\ \exists\lambda_0,\ \forall\lambda\ge\lambda_0,\ \left\|dE_{\sqrt{\Delta_\M}}(\lambda)\right\|_{L^p(\M)\to L^{p'}(\M)}\le C\lambda^M.$$
So the question arises naturally, c.f. \cite[Remark 1.5]{GHS}:
$$\text{Can the restriction estimate \eqref{eq:specM} hold in the presence of non-elliptic closed geodesics?}$$
We focus on hyperbolic closed geodesics in this paper and remark that the (non-)existence of well approximated eigenfunctions as above but associated with a hyperbolic closed geodesic is not completely understood. It is a major problem in the study of Quantum Chaos; see Christianson \cite{Chr} and Zelditch \cite[Section 5]{Z}. Nevertheless, in this paper, we are able to treat the restriction estimate in Problem \ref{pblm} on certain hyperbolic manifolds, where all closed geodesics are hyperbolic. To the author's knowledge, these manifolds are the first examples with geodesic trapping for which the restriction estimate \eqref{eq:specM} holds.

\subsection*{Geometric setting}
Denote $\Hb^{n+1}$ the $(n+1)$-$\dim$ hyperbolic space. Let $\M=\Gamma\bs\Hb^{n+1}$ be a convex cocompact hyperbolic manifold, i.e. $\Gamma$ is a discrete group of orientation preserving isometries of $\Hb^{n+1}$ that consists of hyperbolic elements and $\M$ is geometrically finite and has infinite volume. The set of closed geodesics in $\M$ corresponds to the conjugacy classes within the group $\Gamma$. 

The size of the geodesic trapped set is characterized by the limit set $\Lambda_\Gamma$ of $\Gamma$. The limit set $\Lambda_\Gamma\subset\partial\Hb^{n+1}$ is the set of accumulation points on the orbits $\Gamma z$, $z\in\Hb^{n+1}$. The Hausdorff dimension of $\Lambda_\Gamma$, $\delta_\Gamma:=\dimH\Lambda_\Gamma\in[0,n)$. Then the trapped set of the geodesic flow in the unit tangent bundle $S\M$ has Hausdorff dimension $2\delta_\Gamma+1$. See Patterson \cite{P} and Sullivan \cite{Su}.

\begin{ex}
The simplest example of convex cocompact hyperbolic manifolds is the hyperbolic cylinder $\Gamma\bs\Hb^{n+1}$, in which $\Gamma=\Z$ acts on $\Hb^{n+1}$ by powers of a fixed dilation. In this case, the limit set $\Lambda_\Gamma=\{0,\infty\}$. There is only one closed geodesic. On non-elementary convex cocompact hyperbolic manifolds, however, there can be infinitely many closed geodesics.
\end{ex}

It is now well-known by Lax-Phillips \cite{LP1, LP2} that the spectrum of Laplacian $\Delta_\M$ consists of at most finitely many eigenvalues in the interval $(0,n^2/4)$ and absolutely continuous spectrum $[n^2/4,\infty)$ with no embedded eigenvalues. It is hence convenient in notation to consider the restriction estimates for the operator
\begin{equation}\label{eq:P}
P_\M=\left(\Delta_\M-\frac{n^2}{4}\right)_+^\frac12,
\end{equation}
where $(\cdot)_+=\max\{\cdot,0\}$. The operator $P_\M$ has an absolutely continuous spectrum $[0,\infty)$. 

Before we state the main theorem, we remark that the range $1\le p\le p_c$ in the restriction estimate \eqref{eq:specM} can be extended to $1\le p<2$ if $\M=\Hb^{n+1}$ (more generally, $\M$ is a non-trapping asymptotically hyperbolic manifold, see Chen-Hassell \cite[Theorem 1.6 and Remark 1.7]{CH}.) This range is larger than the one on $\R^d$ in \eqref{eq:specRd} and is related to the Kunze-Stein theory \cite{KS} of harmonic analysis on semisimple Lie groups. The extended range of $p$ for restriction estimate persists on the hyperbolic manifolds considered here.

Our main theorem states
\begin{thm}[Restriction estimates on convex cocompact hyperbolic manifolds]\label{thm:specM}
Let $\M=\Gamma\bs\Hb^{n+1}$ be a convex cocompact hyperbolic manifold for which $\delta_\Gamma<n/2$. Then there exists $C>0$ depending on $\M$ and $p$ such that at high energy $\lambda\ge1$,
$$\left\|dE_{P_\M}(\lambda)\right\|_{L^p(\M)\to L^{p'}(\M)}\le 
\begin{cases}
C\lambda^{(n+1)\left(\frac1p-\frac{1}{p'}\right)-1} & \text{for }1\le p\le p_c=\frac{2(n+2)}{n+4},\\
C\lambda^{n\left(\frac1p-\frac12\right)} & \text{for }p_c\le p<2.
\end{cases}$$
\end{thm}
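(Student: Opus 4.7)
The plan is to use the method of images. Lifting $\M=\Gamma\bs\Hb^{n+1}$ to its universal cover $\Hb^{n+1}$ and denoting by $K^\Hb_\lambda$ the radial Schwartz kernel of $dE_{P_{\Hb^{n+1}}}(\lambda)$, one has for $z,z'\in\M$ with lifts $\tilde z,\tilde z'$ in a fixed fundamental domain $\F\subset\Hb^{n+1}$,
\begin{equation*}
K^\M_\lambda(z,z')=\sum_{\gamma\in\Gamma}K^\Hb_\lambda(\tilde z,\gamma\tilde z').
\end{equation*}
I would split this representation into the identity contribution $\gamma=e$ and the tail $\gamma\neq e$.

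For the identity contribution, extending $f\in L^p(\M)$ by zero from $\F$ to a function $\tilde f\in L^p(\Hb^{n+1})$ of the same norm, the associated operator $T_e$ on $L^p(\M)$ satisfies $(T_ef)(z)=(dE_{P_{\Hb^{n+1}}}(\lambda)\tilde f)(\tilde z)$ for $\tilde z\in\F$. Hence the $L^p(\M)\to L^{p'}(\M)$ norm of $T_e$ is dominated by the $L^p(\Hb^{n+1})\to L^{p'}(\Hb^{n+1})$ norm of $dE_{P_{\Hb^{n+1}}}(\lambda)$. The latter is handled by Chen-Hassell \cite{CH} for $1\le p\le p_c$ and by Huang-Sogge \cite{HS} for $p_c\le p<2$, since $\Hb^{n+1}$ is non-trapping asymptotically hyperbolic, and it produces exactly the bound claimed in the theorem.

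For the tail, the plan is to combine two inputs. First, Harish-Chandra asymptotics for the spherical function yield pointwise estimates on $K^\Hb_\lambda$ that exhibit exponential decay of rate $e^{-nd(z,z')/2}$ at large hyperbolic distance, together with polynomial factors in $\lambda$ and the distance. Second, under the hypothesis $\delta_\Gamma<n/2$, Patterson-Sullivan theory gives the convergence of the Poincar\'e series
\begin{equation*}
\sum_{\gamma\in\Gamma}e^{-nd(\tilde z,\gamma\tilde z')/2},
\end{equation*}
uniformly for $\tilde z,\tilde z'$ in the lifted convex core, with quantified growth as the base points recede into the hyperbolic ends. Multiplying these two bounds produces an absolute pointwise bound on the tail kernel of controlled polynomial order in $\lambda$ and decay in the spatial variables. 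An $L^1\to L^\infty$ estimate from this kernel bound, together with an $L^2\to L^2$ estimate coming from boundary values of the resolvent of $\Delta_\M$ near its continuous spectrum $[n^2/4,\infty)$ (whose bottom is indeed $n^2/4$ precisely because $\delta_\Gamma<n/2$, so that there are no $L^2$ eigenvalues of $\Delta_\M$), combined via Riesz-Thorin interpolation, then yields an $L^p\to L^{p'}$ bound on the tail that does not exceed the target bound for $T_e$ in the relevant range of $p$.

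The main obstacle I anticipate is the joint control of the Poincar\'e series and the decay of $K^\Hb_\lambda$ over the infinite-volume ends of $\M$: the fundamental domain is non-compact, and the Poincar\'e series can grow along geodesic rays toward $\partial\Hb^{n+1}$, so making the uniform bound quantitative requires the shadow lemma and Busemann function machinery together with the exponential decay of $K^\Hb_\lambda$. A secondary subtlety is attaining the sharp exponent at the endpoint $p=p_c$, where the hyperbolic-space estimate of Chen-Hassell is itself at its critical scale and leaves little room in the interpolation argument.
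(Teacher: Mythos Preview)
Your decomposition via the method of images and the identity/tail split is exactly what the paper does, and your identification of the uniformity of the Poincar\'e series over the noncompact fundamental domain as the main technical obstacle is on point. However, the interpolation scheme you propose for the tail has a genuine gap, and it is precisely the ``secondary subtlety'' you flag at the end.

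First, the $L^2\to L^2$ input you invoke does not exist. Boundary values of the resolvent (the limiting absorption principle) on a manifold with continuous spectrum give bounds only between \emph{weighted} $L^2$ spaces, never $L^2\to L^2$; correspondingly, $dE_{P_\M}(\lambda)$ is a density and is not $L^2$-bounded for any $\lambda$. Restricting to the tail does not help: a crude Schur-test computation shows that $\int_\M \lambda^{n/2}\sum_{\gamma\ne e}e^{-nd_\Hb(\tilde z,\gamma\tilde z')/2}\,dz'$ diverges, since after unfolding the sum one is integrating $e^{-nr/2}$ against the hyperbolic volume $e^{nr}\,dr$. Second, even if you had an $L^2\to L^2$ bound $O(\lambda^a)$, Riesz--Thorin between that and the tail $L^1\to L^\infty$ bound $O(\lambda^{n/2})$ reaches the target $\lambda^{n/(n+2)}$ at $p=p_c$ only when $a\le 0$, which you have no mechanism to produce. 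So this route cannot give the sharp Tomas--Stein endpoint.

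What the paper does instead is to avoid operator-level interpolation for the tail altogether. It verifies \emph{pointwise} bounds on the full kernel $dE_{P_\M}(\lambda)(x,y)$ \emph{and on its $\lambda$-derivatives} $\partial_\lambda^j dE_{P_\M}(\lambda)(x,y)$ for the specific orders $j$ required by the abstract Guillarmou--Hassell--Sikora theorem (\cite[Theorem~3.1]{GHS}); that theorem already packages the Stein complex-interpolation machinery that produces the sharp exponent at $p_c$. The Chen--Hassell pointwise estimates on $\Hb^{n+1}$ supply the identity term and the exponential off-diagonal decay for each $\gamma$-translate; the uniform control over the noncompact fundamental domain comes not from the shadow lemma but from a lemma of Guillarmou--Moroianu--Park giving $e^{-d_\Hb(x,\gamma y)}\le C e^{-l_\gamma}\min\{1,d_\F(x,y)^{-k}\}$ for $l_\gamma$ large, which both makes the Poincar\'e series estimate uniform and recovers the decay in $d_\M(x,y)$ that the GHS pointwise hypothesis demands. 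For the range $p_c\le p<2$, the paper establishes the uniform bound $|\partial_\lambda^j dE_{P_\M}(\lambda)(x,y)|\le C\lambda^{n/2}$ and then follows \cite[Section~2.2]{CH}. The piece of the argument you are missing is the role of the $\lambda$-derivatives of the spectral measure: they are what replace your unavailable $L^2$ endpoint.
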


Some remarks on the proof of the theorem and further investigations are in order.

\begin{rmk}[Restriction estimates at low energy]\label{rmk:low}
Under the condition  in Theorem \ref{thm:specM}, the resolvent (acting on appropriate spaces, see e.g. Bourgain-Dyatlov \cite{BD})
$$\mathcal R_\lambda:=(\Delta_\M-n^2/4-\lambda^2)^{-1}$$ 
is holomorphic in the half complex plane $\{\lambda\in\C:\mathrm{Im}\lambda>-(n/2-\delta_\Gamma)\}$ by the Patterson-Sullivan theory \cite{P, Su}. So in this half plane, there are no resonances, which are the poles of $\mathcal R_\lambda$ in $\C$. (That is, there is a spectral gap of size at least $n/2-\delta_\Gamma>0$.) In particular, there is no resonance at the bottom of the continuous spectrum $[0,\infty)$ of $\Delta_\M-n^2/4$. This condition guarantees that the restriction estimates at low energy $\lambda\le1$ in Chen-Hassell \cite[Theorems 1.5 and 1.6]{CH} remain valid. That is, at low energy $\lambda\le1$,
$$\left\|dE_{P_\M}(\lambda)\right\|_{L^p(\M)\to L^{p'}(\M)}\le C\lambda^2\quad\text{for }1\le p<2.$$
\end{rmk}

\begin{rmk}[Critical $\delta_\Gamma$ for the restriction estimate]
Our method in this paper can not treat the restriction estimate in Problem \ref{pblm} on $\M=\Gamma\bs\Hb^{n+1}$ for which $\delta_\Gamma\ge n/2$. It is not yet clear whether the restriction estimate \eqref{eq:specM} holds on such manifolds with large limit sets (and thus with large hyperbolic trapped sets). Notice that in the extreme case when $\M$ is compact, $\Lambda_\Gamma=\partial\Hb^{n+1}$ (so $\delta_\Gamma=n$) and \eqref{eq:specM} fails. It is interesting to find the ``critical'' dimension $n/2\le\delta_c\le n$ of the limit sets for which \eqref{eq:specM} fails for the corresponding hyperbolic manifolds. We plan to investigate this problem in a future work. Some relevent spectral information on hyperbolic surfaces (i.e. $\dim\M=2$) when $\delta_\Gamma\ge1/2$ has recently been proved, in particular, Bourgain-Dyatlov \cite{BD} established an essential spectral gap for the resolvent $\mathcal R_\lambda$ in $\C$. 
\end{rmk}

\begin{rmk}[More general geometries for which the hyperbolic trapped sets are small]
The proof of Theorem \ref{thm:specM} is inspired by Burq-Guillarmou-Hassell \cite[Theorem 1.1]{BGH}, in which they studied the Strichartz estimates for Schr\"odinger equation on the convex cocompact hyperbolic manifolds for which $\delta_\Gamma<n/2$. In the same paper, the authors also treated more general classes of manifolds, including manifolds that contain small sets of hyperbolic trapped sets but \textit{not} necessarily with constant negative curvature. Instead of using the Hausdorff dimension of the limit set to characterize the size of trapped set, they used the topological pressure condition\footnote{The topological pressure condition reduces to the condition about Hausdorff dimension of the limit set if the manifold has constant negative curvature. See \cite[Lemma 3.5]{BGH}.}. It is interesting to see if Theorem \ref{thm:specM} can be generalized to such setting. 
\end{rmk}

\section{Proof of Theorems \ref{thm:specM}}

The main tool to prove the Tomas-Stein restriction estimates in Theorem \ref{thm:specM} is the abstract spectral theory by Guillarmou-Hassell-Sikora \cite[Theorem 3.1]{GHS}. See also Chen \cite{Che}. 

\begin{thm}\label{thm:specabs}
Let $(X,d,\mu)$ be a metric measure space and $L$ be an abstract nonnegative self-adjoint operator on $L^2(X,\mu)$. Assume that the spectral measure $dE_{\sqrt L}(\lambda)$ has a Schwartz kernel $dE_{\sqrt L}(\lambda)$ for $x,y\in X$. Suppose that there is a subset $I\subset[0,\infty)$ such that for $\lambda\in I$,
\begin{equation}\label{eq:specpt}
\left|\frac{d^j}{d\lambda^j}dE_{\sqrt L}(\lambda)(x,y)\right|\le C\lambda^{m-1-j}\left(1+\lambda d(x,y)\right)^{-(m-1)/2+j},
\end{equation}
in which
\begin{enumerate}[(i).]
\item $j=0$, $j=m/2-1$, and $j=m/2$ if $m$ is even,
\item $j=m/2-3/2$ and $j=m/2+1/2$ if $m$ is odd.
\end{enumerate}
Then the following Tomas-Stein restriction estimate holds for all $\lambda\in I$ and $1\le p\le p_c$.
$$\left\|dE_{\sqrt L}(\lambda)\right\|_{L^p(\M)\to L^{p'}(\M)}\le C\lambda^{m\left(\frac1p-\frac{1}{p'}\right)-1}.$$
\end{thm}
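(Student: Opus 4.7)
The plan is to adapt the classical Tomas--Stein argument to the abstract spectral setting via Stein's complex interpolation theorem. We think of $dE_{\sqrt L}(\lambda)$ as the distributional limit of a smoothed spectral approximation and place it at an endpoint of an analytic family of Bochner--Riesz type operators in the functional calculus of $\sqrt L$. One interpolates between a trivial $L^2\to L^2$ bound, given essentially for free by the spectral theorem, and an $L^1\to L^\infty$ kernel bound that must be extracted from the pointwise hypotheses \eqref{eq:specpt}.

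The core step is the kernel bound. Fix a Schwartz function $\varphi$ with $\hat\varphi\in C_0^\infty(\R)$ and $\int\varphi=1$, and set
\[
F_\delta(\lambda)(x,y):=\frac{1}{\delta}\int_0^\infty\varphi\!\left(\frac{\sigma-\lambda}{\delta}\right)dE_{\sqrt L}(\sigma)(x,y)\,d\sigma,
\]
so that $F_\delta(\lambda)\to dE_{\sqrt L}(\lambda)$ as $\delta\to 0$. Taylor expanding $dE_{\sqrt L}(\sigma)(x,y)$ around $\sigma=\lambda$ to the order specified by the hypothesis, applying the derivative kernel bounds at the listed values of $j$, and exploiting the Schwartz decay and moment properties of $\varphi$, one extracts a uniform pointwise estimate $|F_\delta(\lambda)(x,y)|\le C\lambda^{m-1}(1+\lambda d(x,y))^{-(m-1)/2}$ independent of small $\delta$. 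The parity split in the hypothesis reflects the fact that the decay rate $(m-1)/2$ is in general a half-integer, so two adjacent integer orders of differentiability in $\sigma$ must be combined to reach it.

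With the uniform kernel bound in hand, build an analytic family $T^z(\lambda)$ of Bochner--Riesz type in the functional calculus of $\sqrt L$, such that $T^{z_0}(\lambda)=dE_{\sqrt L}(\lambda)$ for a specific value $z_0$. On the vertical line with large $\mathrm{Re}\,z$, $T^z(\lambda)$ is a bounded multiplier and the spectral theorem yields the required $L^2\to L^2$ bound with explicit $\lambda$-dependence. On the vertical line where $\mathrm{Re}\,z$ is critical (a shift of $z_0$ by $(m-1)/2$), $T^z(\lambda)$ can be written as a weighted average of the smoothed projectors $F_\delta$, and the kernel bound from the previous step delivers the $L^1\to L^\infty$ bound with the correct power of $\lambda$. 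Stein's interpolation theorem then produces the Tomas--Stein estimate at $z=z_0$ for $1\le p\le p_c$, with the exponent $\lambda^{m(1/p-1/p')-1}$ dictated by matching the two endpoints. The main obstacle is the kernel bound: pushing Taylor's formula through using only the three (respectively two, for odd $m$) specified integer derivative bounds, and matching the half-integer decay rate $(m-1)/2$ without logarithmic losses, is delicate and is exactly where the combinatorial structure of the hypothesis enters.
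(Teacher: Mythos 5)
First, note that the paper does not prove Theorem \ref{thm:specabs}: it is imported verbatim from Guillarmou--Hassell--Sikora \cite[Theorem 3.1]{GHS} (see also Chen \cite{Che}), so the comparison must be with the proof given there. That proof decomposes the spectral measure dyadically in the variable $\lambda d(x,y)$, proves for the piece supported where $\lambda d(x,y)\sim 2^k$ an $L^1\to L^\infty$ bound $C\lambda^{m-1}2^{-k(m-1)/2}$ (directly from the $j=0$ hypothesis) and an $L^2\to L^2$ bound $C2^k\lambda^{-1}$ (this is the key lemma, and it is exactly here that the derivative hypotheses at the specific values $j\approx m/2$ are consumed), and then interpolates and sums; the geometric sum converges precisely when $p\le p_c$, which is where the Tomas--Stein exponent comes from.

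Your proposal has the right architecture (Stein interpolation between an $L^2$ line and an $L^1\to L^\infty$ line for a Riesz-mean type family), but the step carrying all of the difficulty is not justified and, as described, would fail. The uniform bound you extract for the smoothed projectors, $|F_\delta(\lambda)(x,y)|\le C\lambda^{m-1}(1+\lambda d(x,y))^{-(m-1)/2}$, is just the $j=0$ hypothesis averaged against an approximate identity --- no Taylor expansion and no higher derivatives are needed for it --- and its supremum over $x,y$ is of size $\lambda^{m-1}$, attained near the diagonal. Consequently an operator on the critical line that is merely ``a weighted average of the $F_\delta$'' inherits only $\|\cdot\|_{L^1\to L^\infty}\le C\lambda^{m-1}$, and interpolating that with an $O(1)$ bound on $L^2$ yields $\lambda^{(m-1)\left(\frac1p-\frac1{p'}\right)}$, which at $p=p_c$ is $\lambda^{2(m-1)/(m+1)}$ --- the square of the desired $\lambda^{m\left(\frac1p-\frac1{p'}\right)-1}=\lambda^{(m-1)/(m+1)}$. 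The whole point of the Tomas--Stein mechanism is to convert the off-diagonal decay $(1+\lambda d(x,y))^{-(m-1)/2}$ into an actual gain in the operator norm, and your argument never does this: on the critical line the Riesz mean is not a positive average of projectors but an oscillatory integral in the spectral parameter, and the needed uniform kernel bound $C\lambda^{(m-1)/2}$ there is obtained by integrating by parts in $\sigma$ against the singular weight $(\lambda-\sigma)_+^{a}/\Gamma(a+1)$ continued to $\mathrm{Re}\,a=-(m+1)/2$ --- which is where the hypotheses for the listed $j$ genuinely enter (equivalently, in \cite{GHS}, in the $L^2$ bound $C2^k\lambda^{-1}$ of the distance-localized pieces). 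As written, your proof uses the derivative hypotheses only to rederive a bound already contained in the $j=0$ case, which is a reliable sign that the argument is not closing the actual gap.
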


In application, we substitute $L=(\Delta_\M-n^2/4)_+$ into the above theorem to prove Theorem \ref{thm:specM}. We begin from the estimates on the hyperbolic space $\Hb^{n+1}$. For notational simplicity, from now on we denote
$$\Hb=\Hb^{n+1}.$$
Let
$$P_\Hb=\left(\Delta_\Hb-\frac{n^2}{4}\right)_+^\frac12.$$
The following pointwise estimates are from Chen-Hassell \cite[Equations (1.9) and (1.10)]{CH}. They actually proved these estimates on asymptotically hyperbolic manifolds with geodesic non-trapping condition.
\begin{prop}[Pointwise estimates of the spectral measure on $\Hb$]\label{prop:specptH}
The Schwartz kernel of $dE_{P_\Hb}(\lambda)(x,y)$ for $\lambda\ge1$ and $x,y\in\Hb(=\Hb^{n+1})$ satisfies 
$$\left|\frac{d^j}{d\lambda^j}dE_{P_\Hb}(\lambda)(x,y)\right|\le\begin{cases}
C\lambda^{n-j}(1+\lambda d_\Hb(x,y))^{-n/2+j} & \text{for }d_\Hb(x,y)\le1;\\
C\lambda^{n/2}d_\Hb(x,y)^je^{-nd_\Hb(x,y)/2} & \text{for }d_\Hb(x,y)\ge1.
\end{cases}$$
Here, $d_\Hb$ is the hyperbolic distance in $\Hb$.
\end{prop}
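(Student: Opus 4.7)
Since $\Hb = \Hb^{n+1}$ is a rank-one Riemannian symmetric space, the isometry group acts transitively on pairs of points at a fixed distance, so the Schwartz kernel $dE_{P_\Hb}(\lambda)(x,y)$ depends only on $r := d_\Hb(x,y)$. The plan is to write this kernel explicitly via the Helgason--Fourier transform and analyze the small-$r$ and large-$r$ regimes separately. One has a representation of the form
$$dE_{P_\Hb}(\lambda)(r) = c_n\, |\mathbf{c}(\lambda)|^{-2}\, \varphi_\lambda(r),$$
where $\varphi_\lambda$ is the radial spherical function (the bi-invariant eigenfunction of $\Delta_\Hb$ with eigenvalue $n^2/4 + \lambda^2$ normalized by $\varphi_\lambda(0) = 1$), and $\mathbf{c}(\lambda)$ is the Harish-Chandra $\mathbf{c}$-function, which satisfies $|\mathbf{c}(\lambda)|^{-2} \asymp \lambda^n$ uniformly in $\lambda \geq 1$.

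For the short-distance regime $r \leq 1$, I would use that $\varphi_\lambda(r)$ interpolates between $\varphi_\lambda(r) \approx 1$ when $\lambda r \lesssim 1$ and Bessel-type oscillatory decay $\varphi_\lambda(r) \sim (\lambda r)^{-n/2}\cdot (\text{oscillation})$ when $\lambda r \gtrsim 1$, up to smooth non-oscillatory corrections coming from the volume element $\sinh^n r$ replacing $r^n$. This yields $|\varphi_\lambda(r)| \lesssim (1+\lambda r)^{-n/2}$, which combined with the $\mathbf{c}$-function estimate gives the desired $\lambda^n (1+\lambda r)^{-n/2}$ bound. Each $\lambda$-derivative brings down at worst a factor of $r$ from the oscillatory phase $e^{\pm i\lambda r}$; under $r \leq 1$ this is absorbed as $\lambda^{-1}(1+\lambda r)$, giving the claimed $j$-th derivative estimate.

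For the long-distance regime $r \geq 1$, I would invoke the classical asymptotic expansion
$$\varphi_\lambda(r) = \mathbf{c}(\lambda)\, e^{(i\lambda - n/2)r} + \mathbf{c}(-\lambda)\, e^{(-i\lambda - n/2)r} + O\!\left(e^{-(n/2+1)r}\right),$$
valid uniformly for $\lambda \geq 1$ and $r \geq 1$. The key features are the exponential decay $e^{-nr/2}$ from the leading terms and the fact that $|\mathbf{c}(\lambda)|^{-2}\,|\mathbf{c}(\pm\lambda)| \asymp \lambda^{n/2}$ for $\lambda \geq 1$. Differentiating in $\lambda$ produces polynomial factors of $r^j$ from the phases $e^{\pm i\lambda r}$ but preserves the exponential decay, yielding the claimed $\lambda^{n/2} r^j e^{-nr/2}$ bound.

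The main technical obstacle will be in the long-distance regime: one must verify that no worse growth than $r^j$ appears when differentiating $\varphi_\lambda$ term by term, and that the error in the asymptotic expansion together with its $\lambda$-derivatives is controlled uniformly by $C_j\, e^{-(n/2+1)r}$ with at most $r^j$-type blow-up, so that it stays dominated by the leading terms. An alternative and perhaps cleaner route, followed by Chen-Hassell, is to work directly with the half-wave kernel: the operator $\cos(tP_\Hb)$ admits an explicit Hadamard-descent formula on $\Hb^{n+1}$, and combining it with finite propagation speed and stationary phase in $t$ allows one to read off the stated bounds uniformly in the regime $\lambda \geq 1$.
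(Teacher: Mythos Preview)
The paper does not give its own proof of this proposition: it is quoted directly from Chen--Hassell \cite[Equations (1.9) and (1.10)]{CH}, who establish it (by resolvent/half-wave kernel parametrix methods) in the much more general setting of non-trapping asymptotically hyperbolic manifolds. Your proposal is therefore not a reproduction of the paper's argument but a genuinely different, more elementary route that exploits the symmetric-space structure of $\Hb^{n+1}$: the spectral measure is the Plancherel density $|\mathbf{c}(\lambda)|^{-2}$ times the spherical function $\varphi_\lambda(r)$, and both factors have well-known asymptotics. This approach is correct and considerably shorter for the exact model space, buying you explicit formulas at every step; the trade-off is that it does not extend to perturbations of $\Hb^{n+1}$, whereas the Chen--Hassell parametrix does. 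Your identification of the one nontrivial point---uniform control of the $\lambda$-derivatives of the remainder in the Harish-Chandra expansion for $r\ge1$---is accurate; this is handled in the classical literature on spherical functions (the expansion converges absolutely together with all $\lambda$-derivatives, each bounded by $C_j r^j e^{-(n/2+1)r}$), so there is no actual gap. One small omission: when differentiating in $\lambda$ you should also account for derivatives landing on $|\mathbf{c}(\lambda)|^{-2}$ and on $\mathbf{c}(\pm\lambda)$, but for $\lambda\ge1$ these symbols satisfy $|\partial_\lambda^k \mathbf{c}^{\pm1}|\lesssim \lambda^{-k}|\mathbf{c}^{\pm1}|$, so those terms are harmless and in fact better than the ones you kept.
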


\begin{rmknn}
The pointwise upper bounds in the two distance ranges above reflect two different behaviors of the spectral measure on hyperbolic spaces. 
\begin{enumerate}[(1).]
\item When $d_\Hb(x,y)\le1$, the estimate is similar to the one in $\R^d$:
\begin{equation}\label{eq:specptRd}
\frac{d^j}{d\lambda^j}dE_{\sqrt{\Delta_{\R^d}}}(\lambda)(x,y)=\frac{d^j}{d\lambda^j}\int_{\s_\lambda^{d-1}}e^{i(x-y)\cdot\xi}\,d\xi\sim\lambda^{d-1-j}(1+\lambda d_{\R^d}(x,y))^{-\frac{d-1}{2}+j},
\end{equation}
following the standard non-stationary phase asymptotics. See e.g. Stein \cite{St}. 
\item When $d_\Hb(x,y)\ge1$, the exponential estimate is different with the one in $\R^d$ and is related to the exponential volume growth in radii of geodesic balls in the hyperbolic space.
\end{enumerate}
\end{rmknn}

\begin{rmk}\label{rmk:specptH}
In Proposition \ref{prop:specptH}, the distance range cutoff at $d_\Hb(x,y)=1$ is rather arbitrary. Give a convex cocompact group $\Gamma$. For each $\gamma\in\Gamma$, there is a unique hyperbolic line in $\Hb$, called the axis of $\gamma$, which is invariant under $\gamma^k$, $k\in\N$. Then $l_\gamma:=d(z,\gamma z)$ for all $z$ on the axis and is called the displacement length of $\gamma$. Moreover, $l_\gamma=\min_{z\in\Hb}d_\Hb(z,\gamma z)$. Denote
\begin{equation}\label{eq:shortest}
l_0=\min_{\gamma\in\Gamma\setminus\{\Id\}}\{l_\gamma\}.
\end{equation}
We know that $l_0>0$ since $\Gamma$ is a discrete group. In the following, we instead use the spectral measure pointwise estimates on $\Hb$ at the distance range cutoff $d_\Hb(x,y)=l_0/2$:
\begin{equation}\label{eq:specptH}
\left|\frac{d^j}{d\lambda^j}dE_{P_\Hb}(\lambda)(x,y)\right|\le\begin{cases}
C\lambda^{n-j}(1+\lambda d_\Hb(x,y))^{-n/2+j} & \text{for }d_\Hb(x,y)<l_0/2;\\
C\lambda^{n/2}d_\Hb(x,y)^je^{-nd_\Hb(x,y)/2} & \text{for }d_\Hb(x,y)\ge l_0/2.
\end{cases}
\end{equation}
But of course now the constant $C$ depends on $l_0$ (therefore on $\Gamma$).
\end{rmk}

Let $\F\subset\Hb$ be a fundamental domain of $\M=\Gamma\bs\Hb$. Then for $x,y\in\F$,
\begin{equation}\label{eq:specMGamma}
dE_{P_\M}(\lambda)(x,y)=\sum_{\gamma\in\Gamma}dE_{P_\Hb}(\lambda)(x,\gamma y).
\end{equation}

\begin{rmknn}[Spectral measure on Euclidean cylinders]
We remark that the convex cocompact group structure of $\Gamma$ on $\Hb$ is crucial for the restriction estimates on $\Gamma\bs\Hb$. For example, take $\M=\Gamma\bs\R^2$ as a Euclidean cylinder. Here, $\Gamma=\Z$ acts on $\R^2$ by powers of a fixed translation $x\to x+l$, $l\in\R^2\setminus\{0\}$. Then by \eqref{eq:specptRd},
\begin{eqnarray*}
\frac{d}{d\lambda}dE_{\sqrt{\Delta_\M}}(\lambda)(x,y)&=&\sum_{k\in\Z}dE_{\sqrt{\Delta_{\R^d}}}(\lambda)(x,y+kl)\\
&\sim&\sum_{k\in\Z}(1+\lambda d_{\R^d}(x,y+kl))^\frac12\\
&\gtrsim&\lambda^\frac12|l|^\frac12\sum_{k\in\Z}|k|^\frac12
\end{eqnarray*}
clearly fails the estimate in Theorem \ref{thm:specabs} when $m=2$ and $j=1$. On the other hand, there are elliptic closed geodesics $\{(x+tl):t\in[0,1)\}$ and by Guillarmou-Hassell-Sikora \cite[Section 8C]{GHS} the restriction estimate \eqref{eq:specM} fails.
\end{rmknn}

We control the summation in the right-hand-side of \eqref{eq:specMGamma} by the Patterson-Sullivan theory \cite{P, Su}. In particular, the Patterson-Sullivan theory concludes that the Poincar\'e series
\begin{equation}\label{eq:Poincare}
G_s(x,y):=\sum_{\gamma\in\Gamma}e^{-sd_\Hb(x,\gamma y)}
\end{equation}
is convergent if and only if $s>\delta_\Gamma$. In fact, by the triangle inequalities 
$$d_\Hb(y,\gamma y)-d_\Hb(x,y)\le d(x,\gamma y)\le d_\Hb(x,y)+d_\Hb(y,\gamma y),$$
we have that
$$e^{-sd_\Hb(x,y)}e^{-sd_\Hb(y,\gamma y)}\le e^{-sd_\Hb(x,\gamma y)}\le e^{sd_\Hb(x,y)}e^{-sd_\Hb(y,\gamma y)}.$$
Summing over $\gamma\in\Gamma$,
$$e^{-sd_\Hb(x,y)}G_s(y,y)\le G_s(x,y)\le e^{sd_\Hb(x,y)}G_s(y,y).$$
So the convergence of the Poincar\'e series \eqref{eq:Poincare} is independent of $x$ and $y$. When the series $G_s(x,y)$ converges, that is, $s<\delta_\Gamma$, we need a quantitative estimate of it that is sufficient for our purpose. 

Following Borthwick \cite[Section 2.5.2]{B}, if $s<\delta_\Gamma$, then
\begin{equation}\label{eq:Poincarel}
\sum_{\gamma\in\Gamma}e^{-sl_\gamma}<C_s,
\end{equation}
in which $C_s$ depends on $s$ and $\Gamma$.  It immediately follows that for all $R>0$,
\begin{equation}\label{eq:Nl}
N(R):=\#\{\gamma\in\Gamma:l_\gamma\le R\}\le C_R,
\end{equation}
in which $C_R$ depends on $R$ and $\Gamma$.

\begin{lemma}\label{lemma:dist}
Let $\F$ be a fundamental domain of $\M=\Gamma\bs\Hb$. There are constants $R,C>1$ such that for all $\gamma\in\Gamma$ with $l_\gamma>R$ and any $k\in\N$, we have that
$$e^{-d_\Hb(x,\gamma y)}\le Ce^{-l_\gamma}\min\{1,d_\F(x,y)^{-k}\}\quad\text{for all }x,y\in\F.$$
Here, $d_\F(x,y)$ is the distance between $x$ and $y$ in $\F$.
\end{lemma}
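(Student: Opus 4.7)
The plan is to combine a triangle-inequality bound, the hyperbolic trigonometry of translation along the axis, and the convex-cocompact structure confining all axes to the convex hull of the limit set. For the regime $d_\F(x,y)\le 1$, the estimate follows from the elementary triangle inequality $d_\Hb(x,\gamma y)\ge d_\Hb(y,\gamma y)-d_\Hb(x,y)\ge l_\gamma-1$, which yields $e^{-d_\Hb(x,\gamma y)}\le e\cdot e^{-l_\gamma}$ and matches the factor $\min\{1,d_\F(x,y)^{-k}\}=1$ in this regime.

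For $d_\F(x,y)>1$, I would sharpen the displacement estimate using the standard identity $\sinh(d_\Hb(z,\gamma z)/2)=\sinh(l_\gamma/2)\cosh d_\Hb(z,A_\gamma)$, which gives $d_\Hb(z,\gamma z)\ge l_\gamma+2 d_\Hb(z,A_\gamma)-\log 4$ for any $z$. Applying the hyperbolic cosine rule to the right-angled configuration formed by $A_\gamma$ together with the perpendiculars from it to $x$ and $\gamma y$ then yields a bound of the form
$$d_\Hb(x,\gamma y)\ge l_\gamma+d_\Hb(x,A_\gamma)+d_\Hb(y,A_\gamma)-C_0,$$
where $C_0$ depends only on the diameter of the orthogonal projection of $\F$ onto $A_\gamma$, which I show is uniformly bounded in $\gamma$ from the compactness of the axial segment $A_\gamma\cap \F$ modulo $\langle\gamma\rangle$.

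Convex cocompactness then converts distances to the axis into distances in $\F$. Every axis $A_\gamma$ lies in the convex hull $C(\Lambda_\Gamma)$ of the limit set, whose quotient by $\Gamma$ is the compact convex core. Setting $K_0:=\F\cap C(\Lambda_\Gamma)$, a fixed compact subset of $\F$, the core claim is that
$$d_\Hb(z,A_\gamma)\ge d_\Hb(z,C(\Lambda_\Gamma))\ge d_\F(z,K_0)-C_1\quad\text{for all }z\in\F.$$
The first inequality is immediate since $A_\gamma\subset C(\Lambda_\Gamma)$; the second reflects the fact that the funnel ends of $\F$ outside $K_0$ are quasi-isometric to standard hyperbolic funnels, in which hyperbolic depth and intrinsic $\F$-depth agree up to bounded error. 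Combining these with the triangle inequality $d_\F(x,K_0)+d_\F(y,K_0)\ge d_\F(x,y)-\diam K_0$ yields $d_\Hb(x,\gamma y)\ge l_\gamma+d_\F(x,y)-C_3$, and since $t\ge k\log t-C_k$ for all $t\ge 1$, this linear bound implies the logarithmic one required by the lemma.

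The main obstacle will be justifying the comparison $d_\Hb(z,C(\Lambda_\Gamma))\ge d_\F(z,K_0)-C_1$: making it quantitative and uniform requires careful control over how $\F$ meets each funnel end of $\M$. The geometric picture is transparent via the warped-product model of funnels, but a clean proof calls for a specific choice of $\F$ (for instance one adapted to the convex-core decomposition $\M=\mathrm{core}(\M)\sqcup\mathrm{funnels}$) together with an explicit comparison between the ambient hyperbolic metric and the intrinsic metric on $\F$, ensuring that geodesic paths in $\Hb$ reaching $C(\Lambda_\Gamma)$ cannot be shorter than paths constrained to stay inside $\F$ by more than a bounded additive constant.
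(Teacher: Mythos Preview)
Your approach is sound in outline and genuinely different from the paper's. The paper works in the Poincar\'e ball model $\B$ and invokes a lemma of Guillarmou--Moroianu--Park \cite[Lemma~5.2]{GMP} as a black box: for $l_\gamma>R$ and $x,y\in\F$,
\[
e^{-d_\Hb(x,\gamma y)}\le Ce^{-l_\gamma}(1-|x|^2)(1-|y|^2);
\]
after normalizing so that $x$ is the origin $e\in\B$, the ball-model identity $d_\Hb(e,y)=\log\frac{1+|y|}{1-|y|}$ gives $(1-|y|^2)\le C\,d_\F(e,y)^{-k}$ immediately. Axis geometry, projections, and funnel comparisons never enter. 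What your route buys is a self-contained geometric argument that in fact yields the sharper linear bound $d_\Hb(x,\gamma y)\ge l_\gamma+d_\F(x,y)-C$ (hence exponential rather than merely polynomial decay in $d_\F(x,y)$), and it makes transparent that the gain comes from distance to the convex core. The cost is precisely the two technical points you flag: the uniform-in-$\gamma$ bound on the diameter of the orthogonal projection of $\F$ onto $A_\gamma$, and the comparison $d_\Hb(z,C(\Lambda_\Gamma))\ge d_\F(z,K_0)-C_1$; both hold but require a choice of $\F$ adapted to the core--funnel decomposition and some additional work to justify. The paper's ball-model shortcut sidesteps both issues in two lines, at the price of importing \cite{GMP}.
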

\begin{proof}
We use the Poincar\'e ball model $\B$ of the hyperbolic space $\Hb$ and denote $|z|$ the Euclidean norm of $z\in\B$. From Guillarmou-Moroianu-Park \cite[Lemma 5.2]{GMP}, there are positive constants $R$ and $C$ such that for all $\gamma\in\Gamma$ with $l_\gamma>R$ and all $x,y\in\F$,
$$e^{-d_\Hb(x,\gamma y)}\le Ce^{-l_\gamma}(1-|x|^2)(1-|y|^2)\le Ce^{-l_\gamma}.$$
Notice that $d_\Hb(x,\gamma y)=d_\Hb(\gamma_0x,\gamma_0\gamma y)$ for any hyperbolic isometry $\gamma_0$ of $\B$. Choose $\gamma_0$ such that $\gamma_0z=e$, where $e$ is the origin in $\B$. Therefore without loss of generality, we can assume that $\F\ni e$ and $x=e$. Note that 
$$d_\F(e,y)=\log\left(\frac{1+|y|}{1-|y|}\right)\le C\log\left(\frac{1}{1-|y|}\right).$$
It thus follows that for all $k\in\N$,
$$(1-|e|^2)(1-|y|^2)=1-|y|^2\le C\left[\log\left(\frac{1}{1-|y|}\right)\right]^{-k}\le Cd_\F(e,y)^{-k}.$$
Hence,
$$e^{-d_\Hb(e,\gamma y)}\le Ce^{-l_\gamma}d_\F(e,y)^{-k}.$$
\end{proof}

\begin{rmk}
Before proving Theorem \ref{thm:specM}, we remark that Chen-Ouhabaz-Sikora-Yan \cite{COSY} developed an abstract system that includes some characterization of the restriction estimates by certain dispersive estimates. In particular, by \cite[Section II.2]{COSY}\footnote{Majority of \cite{COSY} requires that the geometry satisfies volume doubling condition, which the hyperbolic manifolds clearly do not. However, the results in \cite[Section II.2]{COSY} are valid on all metric spaces.}, one can deduce the restriction estimates \eqref{eq:specM} in certain range of $p$ from the dispersive estimate
$$\|e^{it\Delta_\M}\|_{L^1(\M)\to L^\infty(\M)}\le C|t|^{-k}\quad\text{fro some }k>0.$$
However, as seen in Burq-Guillarmou-Hassell \cite[Theorem 1.1]{BGH}, such dispersive estimate on hyperbolic manifolds in general is not sufficient to imply the restriction estimates in the range of $1\le p\le p_c$. On a manifold, the relations between spectral measure estimates in \eqref{eq:specM}, dispersive estimates, and also Strichartz estimates for Schr\"odinger equation are not yet clear. See Burq-Guillarmou-Hassell \cite[Remark 1.3]{BGH}.
\end{rmk}

We now proceed to prove Theorem \ref{thm:specM} by Theorem \ref{thm:specabs}. Fix $x,y\in\M=\Gamma\bs\Hb$, we choose the Dirichlet domain of the point $y$ for the representation of $\M$:
$$\Dc=\Dc_y:=\{z\in\Hb:d_\Hb(z,y)<d_\Hb(z,\gamma y)\text{ for all }\gamma\in\Gamma\setminus\{\Id\}\}.$$
Also, the distance of $x,y$ in $\Dc$ equals $d_\Hb(x,y)$.

To estimate the summation in \eqref{eq:specMGamma}, we first take $\gamma=\Id$.  
\begin{itemize}
\item[Case I.] $d_\Hb(x,y)<l_0/2$. Then the spectral measure pointwise estimate on $\Hb$ in the first distance range of \eqref{eq:specptH} applies. But it coincides with \eqref{eq:specpt} in Theorem \ref{thm:specabs}. 

\item[Case II.] $d_\Hb(x,y)\ge l_0/2$. Then the spectral measure pointwise estimate on $\Hb$ in the second distance range of \eqref{eq:specptH} applies. It is straightforward to see that
\begin{eqnarray*}
\left|\frac{d^j}{d\lambda^j}dE_{P_\Hb}(\lambda)(x,y)\right|&\le&C\lambda^{n/2}d_\Hb(x,y)^je^{-nd_\Hb(x,y)/2}\\
&\le&C\lambda^{n-j}\left(1+\lambda d_\Hb(x,y)\right)^{-n/2+j}.
\end{eqnarray*}
\end{itemize}
In both of these cases for $\gamma=\Id$, the corresponding term $dE_{P_\Hb}(\lambda)(x,\gamma y)$ in the summation \eqref{eq:specMGamma} satisfies the condition \eqref{eq:specpt} in Theorem \ref{thm:specabs}. We then discuss $\gamma\in\Gamma\setminus\{\Id\}$, in the same two cases as above.
\begin{itemize}
\item[Case I.] $d_\Hb(x,y)<l_0/2$. Then $d_\Hb(x,\gamma y)\ge l_0/2$ for all $\gamma\in\Gamma\setminus\{\Id\}$. If not, i.e. $d_\Hb(x,\gamma y)<l_0/2$, then triangle inequality implies that
$$l_\gamma=\min_{z\in\Hb}d_\Hb(z,\gamma z)\le d_\Hb(y,\gamma y)\le d_\Hb(x,y)+d_\Hb(x,\gamma y)<l_0.$$
contradicting with the fact that $l_0=\min_{\gamma\in\Gamma\setminus\{\Id\}}\{l_\gamma\}$ defined in \eqref{eq:shortest}. 

\item[Case II.] $d_\Hb(x,y)\ge l_0/2$. Then by the definition of the Dirichlet domain, 
$$d_\Hb(x,\gamma y)>d_\Hb(x,y)\ge l_0/2\quad\text{for all }\gamma\in\Gamma\setminus\{\Id\}.$$ 
\end{itemize}

Up to this point, to estimate the summation in \eqref{eq:specMGamma}, we only need to estimate the terms for $\gamma\in\Gamma\setminus\{\Id\}$. Moreover, in \eqref{eq:specptH}, the spectral measure pointwise estimate in second distance range applies only.

We write the proof for the restriction estimates \eqref{eq:specM} when $\dim\M=n+1$ is even (so $n$ is odd). To this end, we verify \eqref{eq:specpt} with $\sqrt L=P_\M$ for $j=0$, $j=(n-1)/2$, and $j=(n+1)/2$. The case when $\dim\M=n+1$ is odd proceeds with little modification. 

Write
\begin{eqnarray*}
&&\sum_{\gamma\in\Gamma\setminus\{\Id\}}dE_{P_\Hb}(\lambda)(x,\gamma y)\\
&=&\sum_{\gamma\in\Gamma\setminus\{\Id\}:l_\gamma\le R}dE_{P_\Hb}(\lambda)(x,\gamma y)+\sum_{\gamma\in\Gamma\setminus\{\Id\}:l_\gamma>R}dE_{P_\Hb}(\lambda)(x,\gamma y),
\end{eqnarray*}
in which $R$ is from Lemma \ref{lemma:dist}.

\subsection{The estimate for $j=0$}
Using the fact that $e^{-st}\le Ct^{-k}$ for any $k\in\R$ uniformly on $t\in(0,\infty)$, 
\begin{eqnarray*}
\sum_{\gamma\in\Gamma\setminus\{\Id\}:l_\gamma\le R}\left|dE_{P_\Hb}(\lambda)(x,\gamma y)\right|&\le&C\sum_{\gamma\in\Gamma\setminus\{\Id\}:l_\gamma\le R}\lambda^{\frac n2}e^{-\frac n2d_\Hb(x,\gamma y)}\\
&\le&CN(R)\lambda^{\frac n2}e^{-\frac n2d_\Hb(x,y)}\\
&\le&C\lambda^n(1+\lambda d_\Hb(x,y))^{-\frac n2}.
\end{eqnarray*}
Here, $C$ depends on $R$.

Set $s$ such that $0<\delta_\Gamma<s<n/2$. Since $l_\gamma>R$, we apply Lemma \ref{lemma:dist} to compute that
\begin{eqnarray*}
\sum_{\gamma\in\Gamma\setminus\{\Id\}:l_\gamma>R}\left|dE_{P_\Hb}(\lambda)(x,\gamma y)\right|&\le&C\sum_{\gamma\in\Gamma:l_\gamma>R}\lambda^{\frac n2}e^{-\frac n2d_\Hb(x,\gamma y)}\\
&\le&C\lambda^{\frac n2}\sum_{\gamma\in\Gamma:l_\gamma>R}e^{-sd_\Hb(x,\gamma y)}\\
&\le&C\lambda^{\frac n2}d_\Hb(x,y)^{-k}\sum_{\gamma\in\Gamma}e^{-sl_\gamma}\\
&\le&C\lambda^n(1+\lambda d_\Hb(x,y))^{-\frac n2},
\end{eqnarray*}
by choosing $k$ large enough. Here, we used \eqref{eq:Nl} so the constant $C$ here depends on $s$ and $R$.

The above two estimates together imply that
$$\sum_{\gamma\in\Gamma\setminus\{\Id\}}\left|dE_{P_\Hb}(\lambda)(x,\gamma y)\right|\le C\lambda^n(1+\lambda d_\Hb(x,y))^{-\frac n2}.$$

\subsection{The estimate for $j=(n-1)/2$}
Set $s$ such that $0<\delta_\Gamma<s<n/2$. First we have that
\begin{eqnarray*}
\sum_{\gamma\in\Gamma\setminus\{\Id\}:l_\gamma\le R}\left|\frac{d^{(n-1)/2}}{d\lambda^{(n-1)/2}}dE_{P_\Hb}(\lambda)(x,\gamma y)\right|&\le&C\sum_{\gamma\in\Gamma\setminus\{\Id\}:l_\gamma\le R}\lambda^{\frac n2}d_\Hb(x,\gamma y)^\frac{n-1}{2}e^{-\frac n2d_\Hb(x,\gamma y)}\\
&\le&C\lambda^{\frac n2}\sum_{\gamma:l_\gamma\le R}e^{-sd_\Hb(x,\gamma y)}\\
&\le&CN(R)\lambda^{\frac n2}e^{-sd_\Hb(x,y)}\\
&\le&C\lambda^\frac{n+1}{2}(1+\lambda d_\Hb(x,y))^{-\frac12}.
\end{eqnarray*}
Then for $l_\gamma>R$, we apply Lemma \ref{lemma:dist} to compute that
\begin{eqnarray*}
\sum_{\gamma\in\Gamma\setminus\{\Id\}:l_\gamma>R}\left|\frac{d^{(n-1)/2}}{d\lambda^{(n-1)/2}}dE_{P_\Hb}(\lambda)(x,\gamma y)\right|&\le&C\sum_{\gamma\in\Gamma:l_\gamma>R}\lambda^{\frac n2}d_\Hb(x,\gamma y)^\frac{n-1}{2}e^{-\frac n2d_\Hb(x,\gamma y)}\\
&\le&C\lambda^{\frac n2}\sum_{\gamma\in\Gamma:l_\gamma>R}e^{-sd_\Hb(x,\gamma y)}\\
&\le&C\lambda^{\frac n2}d_\Hb(x,y)^{-k}\sum_{\gamma\in\Gamma}e^{-sl_\gamma}\\
&\le&C\lambda^\frac{n+1}{2}(1+\lambda d_\Hb(x,y))^{-\frac12},
\end{eqnarray*}
by choosing $k$ large enough. 

The above two estimates together imply that 
$$\sum_{\gamma\in\Gamma\setminus\{\Id\}}\left|\frac{d^{(n-1)/2}}{d\lambda^{(n-1)/2}}dE_{P_\Hb}(\lambda)(x,\gamma y)\right|\le C\lambda^\frac{n+1}{2}(1+\lambda d_\Hb(x,y))^{-\frac12}.$$

\subsection{The estimate for $j=(n+1)/2$}
Set $s$ such that $0<\delta_\Gamma<s<n/2$. First similarly as in the above subsection we have that
$$\sum_{\gamma\in\Gamma\setminus\{\Id\}:l_\gamma\le R}\left|\frac{d^{(n+1)/2}}{d\lambda^{(n+1)/2}}dE_{P_\Hb}(\lambda)(x,\gamma y)\right|\le C\lambda^\frac{n-1}{2}(1+\lambda d_\Hb(x,y))^\frac12.$$
Then for $l_\gamma>R$, we apply Lemma \ref{lemma:dist} to compute that
\begin{eqnarray*}
\sum_{\gamma\in\Gamma\setminus\{\Id\}:l_\gamma>R}\left|\frac{d^{(n+1)/2}}{d\lambda^{(n+1)/2}}dE_{P_\Hb}(\lambda)(x,\gamma y)\right|&\le&C\sum_{\gamma\in\Gamma:l_\gamma>R}\lambda^{\frac n2}d_\Hb(x,\gamma y)^\frac{n+1}{2}e^{-\frac n2d_\Hb(x,\gamma y)}\\
&\le&C\lambda^{\frac n2}\sum_{\gamma\in\Gamma:l_\gamma>R}e^{-sd_\Hb(x,\gamma y)}\\
&\le&C\lambda^{\frac n2}d_\Hb(x,y)^{-k}\sum_{\gamma\in\Gamma}e^{-sl_\gamma}\\
&\le&C\lambda^\frac{n-1}{2}(1+\lambda d_\Hb(x,y))^\frac12.
\end{eqnarray*}
The above two estimates together implies that
$$\sum_{\gamma\in\Gamma\setminus\{\Id\}}\left|\frac{d^{(n+1)/2}}{d\lambda^{(n+1)/2}}dE_{P_\Hb}(\lambda)(x,\gamma y)\right|\le C\lambda^\frac{n-1}{2}(1+\lambda d_\Hb(x,y))^\frac12.$$

By the abstract theory of restriction estimates in Theorem \ref{thm:specabs}, Theorem \ref{thm:specM} for the range $1\le p\le p_c$ follows in even dimensions. The case for odd dimension is similar and we omit it here.

\subsection{Proof of Theorem \ref{thm:specM} for $p_c\le p<2$}
We argue the restriction estimates in the range $p_c\le p<2$ similarly as in Chen-Hassell \cite[Section 2.2]{CH}, i.e. Theorem \ref{thm:specM} for $p_c\le p<2$ follows 
$$\left|\frac{d^j}{d\lambda^j}dE_{P_\M}(\lambda)(x,y)\right|\le C\lambda^\frac n2\quad\text{for all }j\ge1.$$
Again we only need to estimate the summation in \eqref{eq:specMGamma} for $\gamma\ne\Id$ and apply the spectral measure pointwise estimate \eqref{eq:specptH} in second distance range. Set $s$ such that $0<\delta_\Gamma<s<n/2$. First we have that
\begin{eqnarray*}
\sum_{\gamma\in\Gamma\setminus\{\Id\}:l_\gamma\le R}\left|\frac{d^j}{d\lambda^j}dE_{P_\Hb}(\lambda)(x,\gamma y)\right|&\le&C\sum_{\gamma\in\Gamma\setminus\{\Id\}:l_\gamma\le R}\lambda^{\frac n2}d_\Hb(x,\gamma y)^je^{-\frac n2d_\Hb(x,\gamma y)}\\
&\le&C\lambda^{\frac n2}\sum_{\gamma:l_\gamma\le R}e^{-sd_\Hb(x,\gamma y)}\\
&\le&CN(R)\lambda^{\frac n2}\\
&\le&C\lambda^\frac n2.
\end{eqnarray*}
Then for $l_\gamma>R$, we apply $e^{-d_\Hb(x,\gamma y)}\le Ce^{-l_\gamma}$ from Lemma \ref{lemma:dist} to compute that
\begin{eqnarray*}
\sum_{\gamma\in\Gamma\setminus\{\Id\}:l_\gamma>R}\left|\frac{d^j}{d\lambda^j}dE_{P_\Hb}(\lambda)(x,\gamma y)\right|&\le&C\sum_{\gamma\in\Gamma:l_\gamma>R}\lambda^{\frac n2}d_\Hb(x,\gamma y)^je^{-\frac n2d_\Hb(x,\gamma y)}\\
&\le&C\lambda^{\frac n2}\sum_{\gamma\in\Gamma:l_\gamma>R}e^{-sd_\Hb(x,\gamma y)}\\
&\le&C\lambda^{\frac n2}\sum_{\gamma\in\Gamma}e^{-sl_\gamma}\\
&\le&C\lambda^\frac n2.
\end{eqnarray*}

The above two estimates together imply that 
$$\sum_{\gamma\in\Gamma\setminus\{\Id\}}\left|\frac{d^j}{d\lambda^j}dE_{P_\Hb}(\lambda)(x,\gamma y)\right|\le C\lambda^\frac n2.$$

\section*{Acknowledgment}
The author benefited from the conversations with Zihua Guo and Andrew Hassell.

\end{document}